\documentclass[11pt]{article}
\usepackage[utf8]{inputenc}
\usepackage[T1]{fontenc}
\usepackage{amsmath,amssymb,amsfonts}
\usepackage{amsthm}
\usepackage{graphicx}
\usepackage{booktabs}
\usepackage{geometry}
\usepackage{hyperref}
\hypersetup{colorlinks=true, linkcolor=blue, citecolor=blue, urlcolor=blue}
\usepackage{float}
\usepackage{caption}

\newtheorem{theorem}{Theorem}
\newtheorem{definition}{Definition}

\newtheorem{corollary}{Corollary}
\theoremstyle{remark}
\newtheorem{remark}{Remark}

\title{Bias-Corrected Estimators for Joint Entropy,\\
Conditional Entropy, and Mutual Information\\
in the Discrete Bivariate Case:\\
Theory and an Application to Motor Insurance} 

\author{Amadou Diadi\'{e} B\^a\\
LERSTAD, Gaston Berger University, Saint-Louis, S\'{e}n\'{e}gal\\
\texttt{ba.amadou-diadie@ugb.edu.sn} \ \ \texttt{ba1amadou@yahoo.fr}}

\date{}

\begin{document}
\maketitle

\begin{abstract}
This paper studies the finite-sample bias of plug-in estimators for joint 
entropy, conditional entropy, and mutual information for finitely supported 
discrete random variables. Using the univariate reduction framework developed 
in our companion works \cite{Ba2019a,Ba2020}, we derive explicit first-order bias formulas. For a 
pair $(X,Y)$ with support sizes $r$ and $s$, the biases are respectively
\[
-\frac{rs-1}{2n},\qquad
-\frac{r(s-1)}{2n},\qquad
\frac{(r-1)(s-1)}{2n}.
\]
We then propose bias-corrected estimators for the three measures and prove 
that the uncorrected and corrected versions are asymptotically equivalent, 
differing by a deterministic term of order $O(1/n)$; consequently, they share 
the same asymptotic distribution. A simulation study validates the theoretical 
results, including a dedicated independence study and the relationship with 
Wilks' theorem. An application to a motor insurance portfolio, where $X$ is 
the driver's age class and $Y$ the claim severity class, illustrates the 
improvement brought by the bias correction for risk classification.\\

\medskip
\noindent\textbf{Keywords:} Joint entropy, Conditional entropy, Mutual
information, Bias correction, Univariate reduction, Asymptotic equivalence,
Independence testing, Wilks' theorem, Motor insurance, Risk classification.

\medskip
\noindent\textbf{AMS Subject Classification:} 62G05, 62G20, 62B10, 94A17, 62P05.
\end{abstract}

\section{Introduction}

\subsection{Motivation}
In many scientific applications, understanding the relationship between two 
random variables is essential. In neuroscience, mutual information is used 
to quantify information transmission in neural spike trains \cite{Paninski2003}. 
In genomics, joint entropy and mutual information are used to quantify genetic 
diversity and to identify conserved regions \cite{Shenkin1991}. In finance and 
insurance, mutual information between risk factors is of paramount importance 
for risk assessment, pricing, and solvency analysis \cite{Schreiber2000}.

For a pair of discrete random variables, the natural information-theoretic 
measures are the joint entropy, the conditional entropy, and the mutual 
information. They quantify respectively the total uncertainty of the pair, the 
remaining uncertainty of one variable given the other, and the information 
shared between them. These measures extend the univariate Shannon entropy 
\cite{Shannon1948,Cover2006} and are central to information theory, 
statistics, and machine learning.

In practice, the true joint probability distribution (\textit{p.d.}) is unknown, and these 
measures must be estimated from observed data. The most natural approach is 
the plug-in estimator, obtained by replacing the unknown probabilities by 
their empirical counterparts. These estimators are intuitive, consistent, and 
asymptotically normal \cite{Ba2019a,Ba2020}.

However, plug-in estimators suffer from bias in finite samples. This bias can 
be substantial when the sample size $n$ is moderate or when the number of 
categories is large relative to $n$. The presence of bias can affect the 
accuracy of entropy estimates, particularly in applications where sample 
sizes are limited \cite{Miller1955,Basharin1959,Harris1975}.

\subsection{Applications}

\paragraph{Information theory and communication.}
Mutual information quantifies the amount of information that one random 
variable carries about another, serving as a measure of dependence and 
channel capacity \cite{Shannon1948,Cover2006}. Joint entropy measures the 
total uncertainty of a pair, while conditional entropy measures the remaining 
uncertainty after conditioning.

\paragraph{Statistics and machine learning.}
Mutual information is used for feature selection, independence testing, and 
divergence analysis \cite{Cover2006}. Conditional entropy plays a role 
in causal inference and model selection. These measures are widely used in 
unsupervised learning, clustering, and classification.

\paragraph{Insurance and actuarial science.}
In non-life insurance, risk classification relies on the analysis of 
dependence between rating factors (age, vehicle type, geographic zone) and 
claim characteristics (frequency, severity). Mutual information provides a 
natural measure of the predictive power of a rating factor, while conditional 
entropy quantifies the residual uncertainty on the claim severity given the 
rating factor. These quantities are relevant to pricing, segmentation, and 
risk aggregation.

\subsection{Previous Work and Motivation for Bias Correction}
The estimation of entropy from finite samples has a long history. 
The plug-in estimator, while intuitive and consistent, has been known to 
suffer from bias since the early works of Miller \cite{Miller1955}, 
Basharin \cite{Basharin1959}, and Harris \cite{Harris1975}. For Shannon 
entropy with $r$ categories, they established that
\[
\mathbb{E}\bigl[\widehat{H}^{(n)}\bigr] = H - \frac{r-1}{2n} + O(n^{-2}).
\]

For the bivariate case, the estimation of joint Shannon, Rényi, and Tsallis 
entropies, as well as mutual information, was studied in \cite{Ba2019a}, where 
almost sure rates of convergence and asymptotic normality were established 
via a univariate reduction principle. A related work is that of 
Antos and Kontoyiannis \cite{Antos2001}, who studied convergence properties 
of functional estimates for discrete distributions.

Beyond the plug-in estimator, bias corrections have been developed for a wide 
range of information measures: generalized entropies (Rényi, Tsallis), 
extropies, Kullback-Leibler divergence, and mutual information in continuous 
settings (KSG, BI-KSG, G-knn estimators). Bias corrections have also been developed for extropies, both in the 
univariate discrete case \cite{Ba2026extropy} and in related settings. For mutual information, several 
bias correction methods have been proposed, and it is known that the plug-in 
estimator has a positive bias of order $O(1/n)$ 
\cite{Paninski2003,Nemenman2004}. However, explicit first-order bias formulas 
for joint entropy and conditional entropy, derived from a unified framework, 
are still lacking. The present paper fills this gap by providing a unified 
framework for the bivariate discrete case, covering joint entropy, conditional 
entropy, and mutual information simultaneously, with explicit first-order bias 
formulas derived from the same univariate reduction principle.

\subsection{Main Contribution}
The main contributions of this paper are as follows:
\begin{enumerate}
\item \emph{Unified bias correction framework:} extension of the univariate 
reduction principle to the bivariate case, covering the joint entropy, the 
conditional entropy, and the mutual information.
\item \emph{Explicit bias formulas:} derivation of explicit first-order bias 
formulas for the joint entropy, the conditional entropy, and the mutual 
information.
\item \emph{Bias-corrected estimators:} proposal of an explicit bias-corrected 
estimator for each measure.
\item \emph{Asymptotic equivalence:} proof that the uncorrected and 
bias-corrected estimators differ by a term of order $O(1/n)$ and are therefore 
asymptotically equivalent. This is the central result of the paper.
\item \emph{Independence and bias correction:} a dedicated study of the 
behaviour of mutual information under independence, showing that the plug-in 
estimator overestimates information and that the bias-corrected 
estimator is centred near zero.
\item \emph{Link with Wilks' theorem:} a connection between the 
positive bias of the plug-in mutual information estimator under independence 
and the mean of the limiting chi-square distribution appearing in Wilks' 
theorem.
\item \emph{Actuarial application:} an application to a motor insurance 
portfolio with three age classes and four claim severity classes, 
illustrating the practical relevance of the bias correction for risk 
classification.
\item \emph{Simulation validation:} a simulation study 
validating the theoretical findings, including a dedicated independence study 
and a chi-square goodness-of-fit study.
\end{enumerate}

\subsection{Paper Organization}
The paper is organized as follows. Section~\ref{sec:univ} presents the 
univariate reduction framework for two variables, defines the three measures, 
and sets up the estimation framework. Section~\ref{sec:bias} derives the 
bias-corrected estimators. Section~\ref{sec:asymp} establishes the asymptotic 
equivalence, the relationship with independence, and the connection with 
Wilks' theorem. Section~\ref{sec:sim} presents the simulation study, including 
the motor insurance application. Section~\ref{sec:concl} concludes.

\section{Univariate Reduction Framework}\label{sec:univ}

In the sequel, all the random variables (\textit{r.v.}'s) are defined on the 
same probability space $(\Omega,\mathcal{A},\mathbb{P})$. Additionally, 
entropies will be considered as functions of probability mass functions 
(\textit{p.m.f.'}s), since they only take into account probabilities of 
observing specific events.

\subsection{The Reduction Principle for Two Variables}
Let \(X,Y\) be two jointly distributed discrete \textit{r.v.}'s with respective supports 
\(X(\Omega)=\{x_{1},\dots,x_{r}\}\) and \(Y(\Omega)=\{y_{1},\dots,y_{s}\}\), 
(\(r,s\geq 2\)), and with joint \textit{p.d.} 
$\mathbf{p}_{(X,Y)}=(p_{i,j})_{(i,j)\in I\times J}$ where 
\[
p_{i,j}=\mathbb{P}(X=x_{i},Y=y_{j}),\quad (i,j)\in I\times J,
\]
with \(I=\{1,\dots,r\}\) and \(J=\{1,\dots,s\}\).\\

\noindent Following the univariate reduction framework developed in 
\cite{Ba2019a,Ba2020}, we define an auxiliary univariate \textit{r.v.}\ 
\(Z\) with support \(Z(\Omega)=\{z_{1},\dots,z_{rs}\}\) and \textit{p.d.} 
$\mathbf{p}_{Z}=(p_{Z,k})_{k\in K}$, by
\begin{equation}
p_{Z,k}= \mathbb{P}\!\left(
X = x_{1+\lfloor (k-1)/s\rfloor},\;
Y = y_{k-s\lfloor (k-1)/s\rfloor}
\right),
\label{eq:inverse}
\end{equation}
for all \(k\in K=\{1,\dots,rs\}\), where \(\lfloor x\rfloor\) denotes the 
largest integer less than or equal to \(x\).\\

\noindent Conversely, for any \textit{p.d.}\ $\mathbf{p}_{Z}=(p_{Z,k})_{k\in K}$ 
on \(Z(\Omega)\), we associate the ordered pair \((X,Y)\) with respective 
supports \(X(\Omega)=\{x_{1},\dots,x_{r}\}\) and \(Y(\Omega)=\{y_{1},\dots,y_{s}\}\), 
satisfying
\begin{equation}
p_{i,j}=\mathbb{P}\bigl(Z=z_{\delta_{i}^{j}}\bigr),
\quad \forall (i,j)\in I\times J,
\label{eq:forward}
\end{equation}
where
\begin{equation}
\delta_{i}^{j}=s(i-1)+j.
\label{eq:delta}
\end{equation}

\noindent It is important to note that the index map
\[
\delta:I\times J\to K,\qquad (i,j)\mapsto s(i-1)+j,
\]
has inverse
\[
\delta^{-1}:K\to I\times J,\qquad
k\mapsto \left(1+\Bigl\lfloor\frac{k-1}{s}\Bigr\rfloor,\,
k-s\Bigl\lfloor\frac{k-1}{s}\Bigr\rfloor\right).
\]

\subsection{Entropy Equivalence and Notational Simplification}\label{sec:equiv}
It follows from the univariate reduction framework that the joint entropy of 
the pair \((X,Y)\) coincides with the Shannon entropy of the univariate 
auxiliary variable \(Z\):
\begin{equation}
H(\mathbf{p}_{(X,Y)})=H(\mathbf{p}_Z)
=-\sum_{k=1}^{rs}p_{Z,k}\log p_{Z,k}.
\label{eq:HXY=HZ}
\end{equation}
Indeed, the bijection \(\delta\) implies that the two sums run over the same 
set of probabilities.

\noindent The marginals \textit{p.d.}'s $\mathbf{p}_X=\left(p_{X,i}\right)_{i\in I}$ and $\mathbf{p}_Y=\left(p_{Y,j}\right)_{j\in J}$
 are expressed in terms of 
\(\mathbf{p}_Z\) through:
\[
p_{X,i}=\sum_{j=1}^{s}p_{Z,\delta_{i}^{j}},
\qquad
p_{Y,j}=\sum_{i=1}^{r}p_{Z,\delta_{i}^{j}}.
\]

\noindent Consequently, the conditional entropy of \(Y\) given \(X\) can be 
written as a difference of univariate entropies:
\begin{equation}
H(\mathbf{p}_{Y|X})=H(\mathbf{p}_{Z})-H(\mathbf{p}_{X}),
\label{eq:cond}
\end{equation}
where \(H(\mathbf{p}_{X})=-\sum_{i=1}^{r}p_{X,i}\log p_{X,i}\).

\noindent Similarly, the mutual information between \(X\) and \(Y\) becomes
\begin{equation}
\mathcal{I}(\mathbf{p}_{X:Y})=H(\mathbf{p}_{X})+H(\mathbf{p}_{Y})-H(\mathbf{p}_{Z}).
\label{eq:mi}
\end{equation}

\noindent All joint and conditional entropies, as well as mutual information, 
can therefore be expressed as differences of univariate entropies. In the 
sequel, we use these univariate expressions when deriving bias 
corrections, which simplifies the proofs.

\subsection{Definitions of Bivariate Entropy Measures}

\begin{definition}
\label{def:entropies}
\mbox{}
\begin{enumerate}
\item[(a)] The \emph{joint entropy} of the pair \((X,Y)\) is defined as
\[
H(\mathbf{p}_{(X,Y)})=-\sum_{(i,j)\in I\times J}p_{i,j}\log p_{i,j}.
\]
\item[(b)] The \emph{conditional entropy of \(Y\) given \(X\)} is defined as
\[
H(\mathbf{p}_{Y|X})
=-\sum_{(i,j)\in I\times J}p_{i,j}\log\frac{p_{i,j}}{p_{X,i}}.
\]
\item[(c)] The \emph{mutual information between \(X\) and \(Y\)} is defined as
\[
\mathcal{I}(\mathbf{p}_{X:Y})
=\sum_{(i,j)\in I\times J}p_{i,j}\log\frac{p_{i,j}}{p_{X,i}\,p_{Y,j}}.
\]
\end{enumerate}
where the marginal masses are
\[
p_{X,i}=\sum_{j\in J}p_{i,j},
\qquad
p_{Y,j}=\sum_{i\in I}p_{i,j}.
\]
\end{definition}

\noindent We have the following fundamental relationships
\[
H(\mathbf{p}_{Y|X})=H(\mathbf{p}_{(X,Y)})-H(\mathbf{p}_{X}),
\]
\[
\mathcal{I}(\mathbf{p}_{X:Y})=H(\mathbf{p}_{X})+H(\mathbf{p}_{Y})-H(\mathbf{p}_{(X,Y)}).
\]

\noindent The quantity \(H(\mathbf{p}_{(X,Y)})\) is the average randomness or 
uncertainty of the jointly distributed \textit{r.v.'s} \(X,Y\), with high 
values reflecting a high degree of uncertainty. \(H(\mathbf{p}_{Y|X})\) 
represents the average randomness of \(Y\) given knowledge of \(X\), and 
\(\mathcal{I}(\mathbf{p}_{X:Y})\) quantifies the amount of information that 
\(X\) provides about \(Y\) (or vice versa).

\subsection{Estimation}\label{sec:estim}

In practice, the joint distribution 
$\mathbf{p}_{(X,Y)}=(p_{i,j})_{(i,j)\in I\times J}$ is unknown, and so are the 
marginal \textit{p.d.}'s $\mathbf{p}_X=\left( p_{X,i}\right)_{i\in I}$ and   $\mathbf{p}_Y=\left( p_{Y,j}\right)_{j\in J}$, on which the three bivariate measures 
depend. \\
To estimate them, we use the univariate reduction principle of 
Section~\ref{sec:univ}. \\
Let $Z_1,\dots,Z_n$  be an \textit{i.i.d.} sample of size $n$
from the \textit{p.d.} $\mathbf{p}_Z=\left(p_{Z,k}\right)_{k\in K}$. \\
 Define the empirical \textit{p.d.} of  $\mathbf{p}_Z$ as
\begin{equation}
\widehat{\mathbf{p}}_{Z}^{(n)}
= \left(\widehat{p}_{Z,k}^{(n)}\right)_{k\in K}\qquad \text{where}\qquad \widehat{p}_{Z,k}^{(n)}
= \frac{1}{n}\sum_{\ell=1}^{n}\mathbf{1}_{z_k}(Z_\ell),
\end{equation}
with $\mathbf{1}_{z_k}(Z_\ell)=1$ if $Z_\ell=z_k$ and $0$ otherwise.\\
 Using the reduction \eqref{eq:forward}, the corresponding empirical 
\textit{p.d.} of the joint \textit{p.d.} $\mathbf{p}_{(X,Y)}$ is
\begin{equation}
\widehat{\mathbf{p}}_{(X,Y)}^{(n)}
= \left(\widehat{p}_{i,j}^{(n)}\right)_{(i,j)\in I\times J}\qquad \text{where}\qquad \widehat{p}_{i,j}^{(n)}
= \frac{1}{n}\sum_{\ell=1}^{n}\mathbf{1}_{z_{\delta_{i}^{j}}}(Z_{\ell})
\label{eq:empdist}
\end{equation}
where $\delta_{i}^{j}$ is as in \eqref{eq:delta}. \\
From which empirical marginal distributions follow as :
\begin{eqnarray}
\widehat{\mathbf{p}}_{X}^{(n)}
&=& \left(\widehat{p}_{X,i}^{(n)}\right)_{i\in I}\qquad \text{where}\qquad \widehat{p}_{X,i}^{(n)}
= \sum_{j\in J}\widehat{p}_{i,j}^{(n)}\\
\text{and} \qquad  
\widehat{\mathbf{p}}_{Y}^{(n)}
&=& \left(\widehat{p}_{Y,j}^{(n)}\right)_{j\in J}\qquad \text{where}\qquad \widehat{p}_{Y,j}^{(n)} 
= \sum_{i\in I}\widehat{p}_{i,j}^{(n)}.
\label{eq:marg_emp}
\end{eqnarray}

\noindent Substituting the empirical masses \eqref{eq:empdist}--\eqref{eq:marg_emp} 
into the definitions of Section~\ref{sec:univ}, we obtain the plug-in estimators 
of the three measures:
\begin{align}
H(\widehat{\mathbf{p}}_{(X,Y)}^{(n)})
&=-\sum_{(i,j)\in I\times J}\widehat{p}_{i,j}^{(n)}
\log\widehat{p}_{i,j}^{(n)}, \label{eq:plugjoint}\\
H(\widehat{\mathbf{p}}_{Y|X}^{(n)})
&=-\sum_{(i,j)\in I\times J}\widehat{p}_{i,j}^{(n)}
\log\frac{\widehat{p}_{i,j}^{(n)}}{\widehat{p}_{X,i}^{(n)}}, 
\label{eq:plugcond}\\
\mathcal{I}(\widehat{\mathbf{p}}_{X:Y}^{(n)})
&=\sum_{(i,j)\in I\times J}\widehat{p}_{i,j}^{(n)}
\log\frac{\widehat{p}_{i,j}^{(n)}}
{\widehat{p}_{X,i}^{(n)}\,\widehat{p}_{Y,j}^{(n)}}.
\label{eq:plugmi}
\end{align}

\section{Bias-Corrected Estimators}\label{sec:bias}

Throughout this section we exploit the equivalences of 
Section~\ref{sec:equiv}. For instance, the plug-in estimator of the joint 
entropy \(H(\mathbf{p}_{(X,Y)})\) coincides with the plug-in estimator of the 
Shannon entropy of the univariate variable \(Z\) with \(rs\) categories. 
Similar reductions apply to the conditional entropy and the mutual information.

\subsection{Joint Entropy}

\begin{theorem}
\label{thm:joint} 
Let $\mathbf{p}_{(X,Y)}$ be the \textit{p.d.} of the pair of 
\textit{r.v.'s} $(X,Y)$, and let $\widehat{\mathbf{p}}_{(X,Y)}^{(n)}$ be its 
empirical counterpart given by \eqref{eq:empdist}, based on an i.i.d.\ sample 
$Z_1,\cdots,Z_n$ of size $n$. Let \(H(\widehat{\mathbf{p}}_{(X,Y)}^{(n)})\) 
be given by \eqref{eq:plugjoint}. Then
\\
\textbf{a-)} the bias of the joint entropy plug-in estimator 
\(H(\widehat{\mathbf{p}}_{(X,Y)}^{(n)})\) is
\begin{equation}
\mathrm{Bias}\bigl(H(\widehat{\mathbf{p}}_{(X,Y)}^{(n)})\bigr)
=-\frac{rs-1}{2n}+o\!\left(\frac{1}{n}\right).
\label{eq:biasjoint}
\end{equation}
\textbf{b-)} the bias-corrected joint entropy estimator is
\begin{equation}
H^{(\mathrm{corr})}(\widehat{\mathbf{p}}_{(X,Y)}^{(n)})
=H(\widehat{\mathbf{p}}_{(X,Y)}^{(n)})+\frac{rs-1}{2n}.
\label{eq:corrjoint}
\end{equation}
\end{theorem}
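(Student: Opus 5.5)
By \eqref{eq:HXY=HZ} applied to the empirical distribution, $H(\widehat{\mathbf{p}}_{(X,Y)}^{(n)})=H(\widehat{\mathbf{p}}_Z^{(n)})$, the plug-in Shannon entropy of an i.i.d.\ sample from a distribution with $rs$ categories. So part a-) reduces to the classical Miller--Basharin computation with $r$ replaced by $rs$. I will redo that computation directly on $\widehat{\mathbf{p}}_Z^{(n)}$ to keep the argument self-contained. I assume all $p_{Z,k}>0$, so that the support really has $rs$ points. If some cells were empty, the constant would instead be the number of positive cells minus one.

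\textbf{Step 1: Taylor expansion.} Set $\phi(x)=-x\log x$, so that $\phi'(x)=-\log x-1$ and $\phi''(x)=-1/x$. Write $\widehat{p}_{Z,k}^{(n)}=p_{Z,k}+\varepsilon_k$. Expanding each term to second order around $p_{Z,k}$ gives
\[
H(\widehat{\mathbf{p}}_Z^{(n)})-H(\mathbf{p}_Z)=\sum_{k=1}^{rs}\phi'(p_{Z,k})\varepsilon_k-\sum_{k=1}^{rs}\frac{\varepsilon_k^2}{2p_{Z,k}}+R_n .
\]

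\textbf{Step 2: expectations of the two main terms.} Since $n\widehat{p}_{Z,k}^{(n)}\sim\mathrm{Bin}(n,p_{Z,k})$, we have $\mathbb{E}\varepsilon_k=0$, so the linear term has zero mean. We also have $\mathbb{E}\varepsilon_k^2=p_{Z,k}(1-p_{Z,k})/n$. Hence the quadratic term has expectation
\[
-\frac{1}{2n}\sum_{k=1}^{rs}(1-p_{Z,k})=-\frac{rs-1}{2n}.
\]

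\textbf{Step 3: the remainder (main obstacle).} The difficulty is that $\phi$ is not smooth at $0$, so a naive third-order Lagrange bound on $\{\widehat{p}_{Z,k}^{(n)}=0\}$ fails. I would split on the event $A_n=\{|\varepsilon_k|\le p_{Z,k}/2\ \text{for all } k\}$.

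On $A_n$, the third derivative $\phi'''(x)=1/x^2$ is bounded by $4/p_{Z,k}^2$. This gives $|R_n|\le C\sum_k|\varepsilon_k|^3$. Since $\mathbb{E}|\varepsilon_k|^3=O(n^{-3/2})$ by binomial moment bounds, this part contributes $o(1/n)$.

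On $A_n^c$, I use the crude bound: both entropies are at most $\log(rs)$, and the polynomial terms of the expansion are bounded by a constant. So $|R_n|$ is bounded, and its contribution is at most $C\,\mathbb{P}(A_n^c)$. By Hoeffding's inequality, $\mathbb{P}(A_n^c)\le 2rs\,e^{-n\min_k p_{Z,k}^2/2}$, which is exponentially small and hence $o(1/n)$.

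Combining Steps 2 and 3 yields \eqref{eq:biasjoint}.

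\textbf{Part b-).} This is immediate. Adding the deterministic constant $(rs-1)/(2n)$ cancels the leading bias term, so $\mathbb{E}\,H^{(\mathrm{corr})}(\widehat{\mathbf{p}}_{(X,Y)}^{(n)})-H(\mathbf{p}_{(X,Y)})=o(1/n)$. This justifies calling \eqref{eq:corrjoint} the bias-corrected estimator.
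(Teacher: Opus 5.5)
Your proposal is correct and follows the paper's route. The paper also uses \eqref{eq:HXY=HZ} to reduce the joint plug-in estimator to the plug-in Shannon entropy of $Z$ with $rs$ cells, and then applies the Miller--Madow expansion. The difference is that the paper simply cites that expansion, whereas you re-derive it, controlling the remainder validly by splitting on $A_n$ and using Hoeffding's inequality. Your explicit assumption that every $p_{i,j}>0$ (otherwise the constant becomes the number of nonempty cells minus one) is a real condition that the paper leaves implicit, and it is worth keeping.
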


\begin{proof}
By \eqref{eq:HXY=HZ}, the joint entropy estimator equals the Shannon entropy 
estimator for the univariate variable \(Z\) with \(rs\) categories. Applying 
the Miller-Madow bias formula \cite{Miller1955,Basharin1959,Harris1975} gives 
\eqref{eq:biasjoint}. Equation~\eqref{eq:corrjoint} follows by subtracting 
the estimated bias.
\end{proof}

\subsection{Conditional Entropy \(H(\mathbf{p}_{Y|X})\)}

\begin{theorem}\label{thm:cond} 
Let $H(\widehat{\mathbf{p}}_{Y|X}^{(n)})$ be given by \eqref{eq:plugcond}. 
Then under the same conditions as in Theorem~\ref{thm:joint},
\\
\textbf{a-)} the bias of the conditional entropy plug-in estimator 
\(H(\widehat{\mathbf{p}}_{Y|X}^{(n)})\) is
\begin{equation}
\mathrm{Bias}\bigl(H(\widehat{\mathbf{p}}_{Y|X}^{(n)})\bigr)
=-\frac{r(s-1)}{2n}+o\!\left(\frac{1}{n}\right).
\label{eq:biascond}
\end{equation}
\textbf{b-)} the bias-corrected conditional entropy estimator is
\begin{equation}
H^{(\mathrm{corr})}(\widehat{\mathbf{p}}_{Y|X}^{(n)})
=H(\widehat{\mathbf{p}}_{Y|X}^{(n)})+\frac{r(s-1)}{2n}.
\label{eq:corrcond}
\end{equation}
\end{theorem}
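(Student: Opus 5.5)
We propose to prove Theorem~\ref{thm:cond} through the decomposition \eqref{eq:cond}, which holds exactly for the plug-in quantities as well as for the true ones. Indeed, for every sample,
\[
H(\widehat{\mathbf{p}}_{Y|X}^{(n)})=H(\widehat{\mathbf{p}}_{(X,Y)}^{(n)})-H(\widehat{\mathbf{p}}_{X}^{(n)}).
\]
To verify this, write $\log(\widehat{p}_{i,j}^{(n)}/\widehat{p}_{X,i}^{(n)})=\log\widehat{p}_{i,j}^{(n)}-\log\widehat{p}_{X,i}^{(n)}$ in \eqref{eq:plugcond}. Then sum over $j$ in the second term, using $\sum_j\widehat{p}_{i,j}^{(n)}=\widehat{p}_{X,i}^{(n)}$ from \eqref{eq:marg_emp}. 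Terms with $\widehat{p}_{i,j}^{(n)}=0$ are handled by the usual convention $0\log 0=0$. This convention must be applied consistently: if $\widehat{p}_{X,i}^{(n)}=0$, then all $\widehat{p}_{i,j}^{(n)}=0$ and both sides drop the index $i$. Taking expectations and subtracting the same identity at the true $\mathbf{p}$ gives
\[
\mathrm{Bias}\bigl(H(\widehat{\mathbf{p}}_{Y|X}^{(n)})\bigr)=\mathrm{Bias}\bigl(H(\widehat{\mathbf{p}}_{(X,Y)}^{(n)})\bigr)-\mathrm{Bias}\bigl(H(\widehat{\mathbf{p}}_{X}^{(n)})\bigr).
\]

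We then treat the two bias terms separately. The first term is given by Theorem~\ref{thm:joint}(a): it equals $-(rs-1)/(2n)+o(1/n)$. For the second term, note that $\widehat{p}_{X,i}^{(n)}=\frac1n\sum_{\ell}\mathbf{1}\{X_\ell=x_i\}$ is itself the empirical distribution of an i.i.d.\ sample $X_1,\dots,X_n$ from $\mathbf{p}_X$. Here $X_\ell$ is the first coordinate of $\delta^{-1}(Z_\ell)$, a measurable function of $Z_\ell$, so i.i.d.-ness is preserved. Hence the univariate Miller--Madow formula \cite{Miller1955,Basharin1959,Harris1975} applies with $r$ categories, giving $-(r-1)/(2n)+o(1/n)$. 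Subtracting yields
\[
-\frac{rs-1}{2n}+\frac{r-1}{2n}=-\frac{rs-r}{2n}=-\frac{r(s-1)}{2n},
\]
up to $o(1/n)$, which is \eqref{eq:biascond}. Part (b) then follows, as in Theorem~\ref{thm:joint}, by adding back the deterministic first-order bias.

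The main point needing care is the applicability of Miller--Madow to both terms. That formula, as classically stated, requires all cell probabilities to be positive: the bias involves the number of categories with positive mass. We will therefore state explicitly that all $p_{i,j}>0$, which implies all $p_{X,i}>0$, since the supports are declared as $\{x_1,\dots,x_r\}$ and $\{y_1,\dots,y_s\}$ with full joint support implicit in Theorem~\ref{thm:joint}. We will also note that the $o(1/n)$ remainders simply add, because both expansions come from the same second-order Taylor argument around fixed positive probabilities; no uniformity in $\mathbf{p}$ is needed. If zero cells were allowed, $rs$ and $r$ would be replaced by the numbers of positive cells, and the same subtraction argument would go through.
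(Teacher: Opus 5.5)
Your proposal is correct and follows essentially the same route as the paper. You use the decomposition $H(\widehat{\mathbf{p}}_{Y|X}^{(n)})=H(\widehat{\mathbf{p}}_{(X,Y)}^{(n)})-H(\widehat{\mathbf{p}}_{X}^{(n)})$, take the joint term from Theorem~\ref{thm:joint}, apply Miller--Madow with $r$ categories to the marginal term, and subtract. Your extra checks make explicit what the paper only assumes implicitly: that the identity holds exactly at the plug-in level under the $0\log 0$ convention, that the $X_\ell$ are i.i.d.\ from $\mathbf{p}_X$, and that all cell probabilities must be positive.
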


\begin{proof}
Using \eqref{eq:cond} and the linearity of the bias,
\[
\mathrm{Bias}\bigl(H(\widehat{\mathbf{p}}_{Y|X}^{(n)})\bigr)
=\mathrm{Bias}\bigl(H(\widehat{\mathbf{p}}_{(X,Y)}^{(n)})\bigr)
-\mathrm{Bias}\bigl(H(\widehat{\mathbf{p}}_{X}^{(n)})\bigr).
\]
From Theorem~\ref{thm:joint},
\(\mathrm{Bias}(H(\widehat{\mathbf{p}}_{(X,Y)}^{(n)}))=-(rs-1)/(2n)\). 
The marginal entropy \(H(\widehat{\mathbf{p}}_{X}^{(n)})\) has bias 
\(-(r-1)/(2n)\). Therefore
\[
\mathrm{Bias}\bigl(H(\widehat{\mathbf{p}}_{Y|X}^{(n)})\bigr)
=-\frac{rs-1}{2n}+\frac{r-1}{2n}+o\!\left(\frac{1}{n}\right)
=-\frac{rs-r}{2n}+o\!\left(\frac{1}{n}\right)
=-\frac{r(s-1)}{2n}+o\!\left(\frac{1}{n}\right).
\]
This proves \eqref{eq:biascond}; \eqref{eq:corrcond} follows by subtracting 
the estimated bias.
\end{proof}

\subsection{Mutual Information \(\mathcal{I}(\mathbf{p}_{X:Y})\)}

\begin{theorem}\label{thm:mi} 
Let $\mathcal{I}(\widehat{\mathbf{p}}_{X:Y}^{(n)})$ be given by 
\eqref{eq:plugmi}. Then under the same conditions as in Theorem~\ref{thm:joint},
\\
\textbf{a-)} the bias of the mutual information plug-in estimator 
\(\mathcal{I}(\widehat{\mathbf{p}}_{X:Y}^{(n)})\) is
\begin{equation}
\mathrm{Bias}\bigl(\mathcal{I}(\widehat{\mathbf{p}}_{X:Y}^{(n)})\bigr)
=\frac{(r-1)(s-1)}{2n}+o\!\left(\frac{1}{n}\right).
\label{eq:biasmi}
\end{equation}
\textbf{b-)} the bias-corrected mutual information estimator is
\begin{equation}
\mathcal{I}^{(\mathrm{corr})}(\widehat{\mathbf{p}}_{X:Y}^{(n)})
=\mathcal{I}(\widehat{\mathbf{p}}_{X:Y}^{(n)})-\frac{(r-1)(s-1)}{2n}.
\label{eq:corrmi}
\end{equation}
\end{theorem}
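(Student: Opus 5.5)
The plan mirrors the proof of Theorem~\ref{thm:cond}. The mutual information is written as a signed combination of three univariate Shannon entropies, and the Miller--Madow expansion is applied to each of them. By \eqref{eq:mi}, applied to the empirical distribution, the plug-in estimator \eqref{eq:plugmi} satisfies the exact algebraic identity
\[
\mathcal{I}(\widehat{\mathbf{p}}_{X:Y}^{(n)})
=H(\widehat{\mathbf{p}}_{X}^{(n)})+H(\widehat{\mathbf{p}}_{Y}^{(n)})-H(\widehat{\mathbf{p}}_{(X,Y)}^{(n)}).
\]
This holds because the empirical marginals in \eqref{eq:marg_emp} are exactly the row and column sums of $\widehat{\mathbf{p}}_{(X,Y)}^{(n)}$, with the convention $0\log 0=0$. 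The identity is the first thing to verify, since it is what makes linearity of the bias legitimate: expectation is linear, and the same identity holds at the population level.

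Next, each term gets its own Miller--Madow expansion. The empirical marginal $\widehat{\mathbf{p}}_{X}^{(n)}$ is the empirical distribution of the i.i.d.\ sample $X_\ell$, the first coordinate of $\delta^{-1}(Z_\ell)$, which has $r$ categories. So $\mathbb{E}[H(\widehat{\mathbf{p}}_{X}^{(n)})]=H(\mathbf{p}_X)-\frac{r-1}{2n}+o(1/n)$, exactly as already used in the proof of Theorem~\ref{thm:cond}. The same argument for $Y$, with $s$ categories, gives bias $-\frac{s-1}{2n}+o(1/n)$. Theorem~\ref{thm:joint} gives bias $-\frac{rs-1}{2n}+o(1/n)$ for the joint term.

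Combining the three expansions gives
\[
-\frac{r-1}{2n}-\frac{s-1}{2n}+\frac{rs-1}{2n}
=\frac{rs-r-s+1}{2n}=\frac{(r-1)(s-1)}{2n},
\]
and the remainders add up to $o(1/n)$. This proves \eqref{eq:biasmi}. Part b) then follows by subtracting this deterministic leading bias from the plug-in estimator, which yields \eqref{eq:corrmi}. With the minus sign, the corrected estimator has bias $o(1/n)$.

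The only genuinely delicate point is the validity of the Miller--Madow expansion for each piece. The classical derivation assumes every category has positive probability, so $r$, $s$ and $rs$ should be read as the numbers of cells with positive mass. The statement implicitly takes all $p_{i,j}>0$ (full support); otherwise the constants change to the effective support sizes. I would state this standing assumption explicitly, inherited from Theorem~\ref{thm:joint}, rather than re-derive the expansion. If a self-contained check is wanted, a second-order Taylor expansion of $-p\log p$ around each true cell probability, using $\mathrm{Var}(\widehat p_k)=p_k(1-p_k)/n$, gives the $-(m-1)/(2n)$ term for a distribution with $m$ positive cells. Controlling the remainder uniformly is the usual technical step, handled by the cited references.
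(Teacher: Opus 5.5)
Your proposal is correct and follows the paper's proof: you write the plug-in mutual information as $H(\widehat{\mathbf{p}}_{X}^{(n)})+H(\widehat{\mathbf{p}}_{Y}^{(n)})-H(\widehat{\mathbf{p}}_{(X,Y)}^{(n)})$, apply the Miller--Madow bias to each of the three entropies, and combine the results by linearity of expectation. You also check that this identity holds exactly for the empirical distributions and state the full-support assumption ($p_{i,j}>0$) that the constants $r$, $s$, $rs$ require; both are sensible additions that the paper leaves implicit.
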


\begin{proof}
Using \eqref{eq:mi} and the linearity of the bias,
\[
\mathrm{Bias}\bigl(\mathcal{I}(\widehat{\mathbf{p}}_{X:Y}^{(n)})\bigr)
=\mathrm{Bias}\bigl(H(\widehat{\mathbf{p}}_{X}^{(n)})\bigr)
+\mathrm{Bias}\bigl(H(\widehat{\mathbf{p}}_{Y}^{(n)})\bigr)
-\mathrm{Bias}\bigl(H(\widehat{\mathbf{p}}_{(X,Y)}^{(n)})\bigr).
\]
The marginal entropy \(H(\widehat{\mathbf{p}}_{X}^{(n)})\) has bias 
\(-(r-1)/(2n)\) and \(H(\widehat{\mathbf{p}}_{Y}^{(n)})\) has bias 
\(-(s-1)/(2n)\). Therefore
\[
\mathrm{Bias}\bigl(\mathcal{I}(\widehat{\mathbf{p}}_{X:Y}^{(n)})\bigr)
=-\frac{r-1}{2n}-\frac{s-1}{2n}+\frac{rs-1}{2n}+o\!\left(\frac{1}{n}\right)
=\frac{(r-1)(s-1)}{2n}+o\!\left(\frac{1}{n}\right).
\]
This proves \eqref{eq:biasmi}; \eqref{eq:corrmi} follows by subtracting 
the estimated bias.
\end{proof}

\subsection{Summary Table}
\begin{table}[h]
\centering
\caption{Summary of bias formulas and bias-corrected estimators for bivariate 
entropies and mutual information.}
\label{tab:bias_summary}
\begin{tabular}{|l|l|l|}
\hline
\textbf{Measure} & \textbf{Bias} & \textbf{Bias-Corrected Estimator} \\
\hline
$H(\mathbf{p}_{(X,Y)})$ & $-\frac{rs-1}{2n}$ & $H + \frac{rs-1}{2n}$ \\
$H(\mathbf{p}_{Y|X})$ & $-\frac{r(s-1)}{2n}$ & $H + \frac{r(s-1)}{2n}$ \\
$\mathcal{I}(\mathbf{p}_{X:Y})$ & $\frac{(r-1)(s-1)}{2n}$ & $\mathcal{I} - \frac{(r-1)(s-1)}{2n}$ \\
\hline
\end{tabular}
\end{table}
\section{Asymptotic Equivalence, Independence and Wilks' Theorem}\label{sec:asymp}
This section contains the main theoretical result of the paper.

\subsection{The equivalence theorem}

\begin{theorem}[Asymptotic equivalence]
\label{thm:asymp}
Let $\widehat{\theta}$ denote any of the three plug-in estimators \eqref{eq:plugjoint}, \eqref{eq:plugcond}, \eqref{eq:plugmi} 
and let $\widehat{\theta}^{(\mathrm{corr})}$ be the corresponding 
bias-corrected estimator, as defined in Theorems~\ref{thm:joint}--\ref{thm:mi}.\\
 Then, as $n\to\infty$,
\begin{equation}\label{eq:asympeq1}
\widehat{\theta}^{(\mathrm{corr})}-\widehat{\theta}
=\frac{C}{2n}+o_p\!\left(\frac{1}{n}\right)
=O_p\!\left(\frac{1}{n}\right),
\end{equation}
and consequently
\begin{equation}\label{eq:asympeq2}
\sqrt{n}\,\bigl(\widehat{\theta}^{(\mathrm{corr})}-\widehat{\theta}\bigr)
\;\xrightarrow{\;\mathbb{P}\;}\;0.
\end{equation}

Thus, for each of the three measures, the uncorrected and bias-corrected 
estimators are asymptotically equivalent. The asymptotic results established 
in \cite{Ba2019a,Ba2020} therefore apply to both versions.
\end{theorem}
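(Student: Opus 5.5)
The plan is to observe that, by the very definitions in Theorems~\ref{thm:joint}--\ref{thm:mi}, the difference $\widehat{\theta}^{(\mathrm{corr})}-\widehat{\theta}$ is not random at all: it is a deterministic constant depending only on $r$, $s$ and $n$. I would therefore treat the three cases together by writing
\[
\widehat{\theta}^{(\mathrm{corr})}-\widehat{\theta}=\frac{C}{2n},
\qquad
C\in\bigl\{\,rs-1,\; r(s-1),\; -(r-1)(s-1)\,\bigr\},
\]
according to whether $\widehat{\theta}$ is \eqref{eq:plugjoint}, \eqref{eq:plugcond} or \eqref{eq:plugmi}. These values are read off directly from \eqref{eq:corrjoint}, \eqref{eq:corrcond} and \eqref{eq:corrmi}; the sign of the mutual-information constant follows from the subtraction in \eqref{eq:corrmi}. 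Because the difference is exactly $C/(2n)$, the $o_p(1/n)$ remainder in \eqref{eq:asympeq1} is identically zero, and the first equality holds trivially.

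For the bound $O_p(1/n)$, I note that $|C|\le rs$ is a fixed finite constant, since $r$ and $s$ do not depend on $n$. Then $n\,|\widehat{\theta}^{(\mathrm{corr})}-\widehat{\theta}|=|C|/2$ for every $n$. A deterministic bounded sequence is trivially bounded in probability, so the definition of $O_p$ is met with any tolerance level.

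For \eqref{eq:asympeq2}, I would compute
\[
\sqrt{n}\,\bigl(\widehat{\theta}^{(\mathrm{corr})}-\widehat{\theta}\bigr)=\frac{C}{2\sqrt{n}},
\]
which tends to $0$ deterministically. Deterministic convergence implies convergence in probability: for every $\varepsilon>0$, the event $\{|C|/(2\sqrt n)>\varepsilon\}$ is empty once $n>C^2/(4\varepsilon^2)$.

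The final assertion, that the asymptotic results transfer, follows from Slutsky's lemma. I would write
\[
\sqrt{n}\,\bigl(\widehat{\theta}^{(\mathrm{corr})}-\theta\bigr)=\sqrt{n}\,(\widehat{\theta}-\theta)+o_p(1).
\]
Hence any distributional limit of the uncorrected estimator established in \cite{Ba2019a,Ba2020} is shared by the corrected one. Almost sure rates of order slower than $1/n$ likewise carry over, since the perturbation is $O(1/n)$ surely.

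I do not expect a genuine obstacle. The only point needing care is bookkeeping of the sign and value of $C$ in each of the three cases. A further subtlety arises in the degenerate case, such as independence for the mutual information, where the $\sqrt n$-limit is a point mass. There the transfer of limits must be done at scale $n$, and then the shift $C/2$ does matter; I would flag this as a remark rather than as part of the proof.
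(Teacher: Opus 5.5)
Your proposal is correct and matches the paper's proof: in both, $\widehat{\theta}^{(\mathrm{corr})}-\widehat{\theta}$ is the deterministic quantity $C/(2n)$ with $C\in\{rs-1,\;r(s-1),\;-(r-1)(s-1)\}$, hence $O(1/n)$, and it vanishes after $\sqrt{n}$-scaling. Your bound via $|C|\le rs$ is slightly cleaner than the paper's, which writes $\bigl|\widehat{\theta}^{(\mathrm{corr})}-\widehat{\theta}\bigr|=C/(2n)$ and so gets the sign wrong for mutual information. Your remark that under independence the transfer must be done at scale $n$, where the shift matters, agrees with the paper's own Wilks discussion.
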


\begin{proof}
For each of the three measures, the correction term takes the form $C/(2n)$, 
where the constant $C$ depends only on the support sizes $r,s$ and belongs 
to the finite set
\[
C \in \Bigl\{\,rs-1,\;\; r(s-1),\;\; -(r-1)(s-1)\,\Bigr\}.
\]
Since $r,s$ are fixed finite integers, $C$ is bounded by a constant $C_{\max}<\infty$ depending only on the support sizes. Hence \[
\bigl|\widehat{\theta}^{(\mathrm{corr})}-\widehat{\theta}\bigr|
=\frac{C}{2n}\le\frac{C_{\max}}{2n},
\]
which is deterministic and $O(1/n)$. Multiplying by $\sqrt{n}$ gives
$O(1/\sqrt{n})\to 0$. 
\end{proof}
\subsection{Transfer of asymptotic properties}

The interest of Theorem~\ref{thm:asymp} is that it transfers every
asymptotic property of the plug-in estimator to the corrected one.

\begin{corollary}[Transfer]\label{cor:transfer}
Let $\theta$ be the true value of one of the three measures, and let
$\widehat{\theta}$, $\widehat{\theta}^{(\mathrm{corr})}$ be the
plug-in and bias-corrected estimators. Then:
\begin{enumerate}
\item \emph{Consistency.} If
$\widehat{\theta}\xrightarrow{\mathbb{P}}\theta$, then
$\widehat{\theta}^{(\mathrm{corr})}\xrightarrow{\mathbb{P}}\theta$.
\item \emph{Asymptotic normality.} If
$\sqrt{n}(\widehat{\theta}-\theta)\xrightarrow{d}\mathcal{N}(0,\sigma^2)$,
then
$\sqrt{n}(\widehat{\theta}^{(\mathrm{corr})}-\theta)
\xrightarrow{d}\mathcal{N}(0,\sigma^2)$ with the \emph{same} variance
$\sigma^2$.
\item \emph{Variance estimation.} If $\widehat{\sigma}^2$ is a
consistent estimator of $\sigma^2$ based on $\widehat{\theta}$, then
$\widehat{\sigma}^2$ is also consistent for the asymptotic variance
of $\widehat{\theta}^{(\mathrm{corr})}$.
\item \emph{Confidence intervals.} If
$[\widehat{\theta}\pm z_{1-\alpha/2}\widehat{\sigma}/\sqrt{n}]$ is an
asymptotically valid $(1-\alpha)$-confidence interval for $\theta$,
then so is
$[\widehat{\theta}^{(\mathrm{corr})}\pm z_{1-\alpha/2}\widehat{\sigma}/\sqrt{n}]$.
\end{enumerate}
\end{corollary}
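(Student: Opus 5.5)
The plan is to use the fact that, by Theorems~\ref{thm:joint}--\ref{thm:mi}, the difference $\widehat{\theta}^{(\mathrm{corr})}-\widehat{\theta}$ is the deterministic sequence $d_n=C/(2n)$. Here $C\in\{rs-1,\,r(s-1),\,-(r-1)(s-1)\}$ depends only on $r,s$. By Theorem~\ref{thm:asymp}, $d_n\to 0$ and $\sqrt{n}\,d_n\to 0$. Each of the four items then reduces to a standard stability property of convergence in probability or in distribution under a vanishing additive perturbation. I will treat the items in the order stated, since each one feeds the next.

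\emph{Consistency.} Write
\[
\widehat{\theta}^{(\mathrm{corr})}-\theta=(\widehat{\theta}-\theta)+d_n .
\]
The first term tends to $0$ in probability by hypothesis. The second is deterministic and tends to $0$. Convergence in probability is preserved under sums, which gives the claim.

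\emph{Asymptotic normality.} Write
\[
\sqrt{n}\bigl(\widehat{\theta}^{(\mathrm{corr})}-\theta\bigr)=\sqrt{n}(\widehat{\theta}-\theta)+\sqrt{n}\,d_n .
\]
The first summand converges in distribution to $\mathcal{N}(0,\sigma^2)$. The second equals $C/(2\sqrt{n})$, which tends to $0$ and hence tends to $0$ in probability; this is \eqref{eq:asympeq2}. Slutsky's lemma then gives the same limit $\mathcal{N}(0,\sigma^2)$. In particular the asymptotic variance of $\widehat{\theta}^{(\mathrm{corr})}$ is the same $\sigma^2$.

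\emph{Variance estimation.} The asymptotic variance of $\widehat{\theta}^{(\mathrm{corr})}$ was just shown to be $\sigma^2$ itself. So $\widehat{\sigma}^2\xrightarrow{\mathbb{P}}\sigma^2$ is precisely consistency for it, and nothing further is needed.

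\emph{Confidence intervals.} This is the step that needs care, and the point to handle is the degenerate case. I would restrict to $\sigma^2>0$, which the normal limit requires for a non-degenerate interval. Under independence the mutual information has $\sigma^2=0$, and the paper treats that case separately via Wilks' theorem.

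Assuming $\sigma^2>0$, the coverage event for the corrected interval can be written as
\[
\Bigl|\sqrt{n}\bigl(\widehat{\theta}^{(\mathrm{corr})}-\theta\bigr)/\widehat{\sigma}\Bigr|\le z_{1-\alpha/2}.
\]
By item 2, item 3 and Slutsky's lemma (with $\widehat{\sigma}\to\sigma>0$ in probability), the pivot converges in distribution to $\mathcal{N}(0,1)$. The boundary points $\pm z_{1-\alpha/2}$ are continuity points of the standard normal law, so the portmanteau theorem gives coverage probability tending to $1-\alpha$.

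The only hypotheses used throughout are that $r,s$ are fixed and that $\sigma>0$ in item 4. The correction term is non-random, so no uniform-integrability or moment argument is required.
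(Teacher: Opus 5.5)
Your proposal is correct and follows the paper's proof: both write $\widehat{\theta}^{(\mathrm{corr})}-\theta$ as the plug-in error plus the deterministic term $C/(2n)$, and both conclude with Slutsky's lemma. In item 4 you restrict to $\sigma^2>0$, assume a consistent $\widehat{\sigma}^2$, and use continuity of the normal law at $\pm z_{1-\alpha/2}$; the paper's ``by the same argument'' relies on these same assumptions without stating them.
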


\begin{proof}
All four statements are immediate consequences of
Theorem~\ref{thm:asymp} and Slutsky's lemma. For (2), write
\[
\sqrt{n}\bigl(\widehat{\theta}^{(\mathrm{corr})}-\theta\bigr)
=\sqrt{n}\bigl(\widehat{\theta}-\theta\bigr)
+\sqrt{n}\bigl(\widehat{\theta}^{(\mathrm{corr})}-\widehat{\theta}\bigr).
\]
The first term converges in distribution to
$\mathcal{N}(0,\sigma^2)$, the second converges in probability to $0$;
Slutsky's lemma gives the result. Statements (1), (3), (4) follow by
the same argument. 
\end{proof}

\begin{remark}[Why the equivalence matters]\label{rem:why}
The asymptotic variance of the corrected estimator is \emph{the same}
as that of the plug-in estimator: the correction does not inflate the
variance. The corrected estimator therefore dominates the plug-in
estimator: it has the same limiting behaviour and a smaller
finite-sample bias. In particular, the practitioner can use the
corrected estimator while continuing to rely on the asymptotic
results established for the plug-in estimator in \cite{Ba2019a,Ba2020}.
\end{remark}

\begin{remark}[Transfer of the results of \cite{Ba2019a,Ba2020}]
The almost sure convergence and asymptotic normality established in
\cite{Ba2019a,Ba2020} for the plug-in estimators of joint entropy,
conditional entropy, and mutual information carry over to the bias-corrected
estimators. Indeed, by Theorem~\ref{thm:asymp}, the correction is a
deterministic term of order \(O(1/n)\). It therefore vanishes almost surely,
in probability, and under \(\sqrt{n}\)-scaling. Consequently, the corrected
estimators inherit the same almost sure rates and the same asymptotic normal
distributions as the plug-in estimators.
\end{remark}

\subsection{Independence and Bias-Corrected Mutual Information}

A fundamental property of mutual information is its relationship with 
statistical independence. For two \textit{r.v.'s} \(U\) and \(V\) with 
\textit{p.m.f.}'s \(\mathbf{p}_{U}\) and \(\mathbf{p}_{V}\), we have
\[
\mathcal{I}(\mathbf{p}_{U:V})=0\quad\Longleftrightarrow\quad\mathbf{p}_{U,V}=\mathbf{p}_{U}\mathbf{p}_{V}.
\]
That is, mutual information equals zero if and only if \(U\) and \(V\) are 
statistically independent. In particular, for the pair \((X,Y)\), we have
\[
\mathcal{I}(\mathbf{p}_{X:Y}) = 0 
\Longleftrightarrow \mathbf{p}_{(X,Y)}=\mathbf{p}_{X}\mathbf{p}_{Y}.
\]

However, in finite samples, the plug-in estimator of mutual information is
positively biased under independence, even though the true mutual information
is zero. This is because the empirical joint \textit{p.d.}
\(\widehat{\mathbf{p}}_{(X,Y)}^{(n)}\) is not exactly equal to the product of
the empirical marginal \textit{p.d.}'s for finite \(n\). Consequently, the
empirical mutual information is nonnegative and is typically strictly positive,
although it can vanish in exceptional cases where the empirical table is
exactly independent.\\

\noindent As established in Theorem~\ref{thm:mi}, the bias of the plug-in 
estimator is
\[
\mathrm{Bias}\bigl(\mathcal{I}(\widehat{\mathbf{p}}_{X:Y}^{(n)})\bigr)
=\frac{(r-1)(s-1)}{2n}+o\!\left(\frac{1}{n}\right).
\]
\noindent This bias is positive. Under independence, where the true 
mutual information is zero, the plug-in estimator overestimates 
the information shared between the variables. The bias-corrected estimator 
proposed in this paper removes the leading bias term:
\[
\mathcal{I}^{(\mathrm{corr})}(\widehat{\mathbf{p}}_{X:Y}^{(n)})
=\mathcal{I}(\widehat{\mathbf{p}}_{X:Y}^{(n)})
-\frac{(r-1)(s-1)}{2n}.
\]
\noindent Under independence, this corrected estimator has expectation closer 
to zero than its uncorrected counterpart. This property is important 
for hypothesis testing. When testing
\[
H_{0}:\mathcal{I}=0\quad\text{(independence)}\quad\text{versus}\quad H_{1}:\mathcal{I}>0,
\]
the uncorrected estimator leads to
\begin{enumerate}
\item an inflated type~I error rate (frequent rejection of independence),
\item an underestimation of the \(p\)-value,
\item an overestimation of the information shared between variables.
\end{enumerate}
The bias-corrected estimator mitigates these issues by providing a more 
accurate estimate of mutual information, especially for moderate sample sizes 
(\(n\leq 200\)). Therefore, for independence testing, we recommend using the 
bias-corrected estimator to avoid false positives and to ensure more reliable 
statistical conclusions.

\noindent This relationship between independence and bias correction can be 
summarized as
\[
\text{Independence}\;
\mathbf{p}_{(X,Y)}=\mathbf{p}_{X}\mathbf{p}_{Y}
\Longrightarrow
\mathcal{I}(\mathbf{p}_{X:Y})=0
\Longrightarrow
\mathcal{I}(\widehat{\mathbf{p}}_{X:Y}^{(n)})\geq 0
\Longrightarrow
\mathcal{I}^{(\mathrm{corr})}(\widehat{\mathbf{p}}_{X:Y}^{(n)})\approx 0.
\]

\noindent Thus, the bias correction procedure proposed in this paper is not 
merely a theoretical refinement; it has practical consequences for independence 
testing and for the interpretation of mutual information estimates in finite 
samples.
\subsection{Wilks' Theorem and the Asymptotic Distribution of the Mutual Information Statistic}
\label{sec:wilks}

A classical result in statistics, due to Wilks \cite{Wilks1938}, establishes 
the asymptotic distribution of the mutual information statistic under 
independence. Specifically, under the null hypothesis of independence
\[
H_0 : \mathcal{I}(\mathbf{p}_{X:Y}) = 0,
\]
the likelihood ratio statistic
\[
\Lambda_n = 2n \, \mathcal{I}(\widehat{\mathbf{p}}_{X:Y}^{(n)})
\]
converges in distribution to a chi-square random variable with 
$(r-1)(s-1)$ degrees of freedom:
\begin{equation}
\Lambda_n \;\xrightarrow{\;\mathcal{D}\;}\; \chi^2_{(r-1)(s-1)}, 
\qquad n \to \infty.
\label{eq:wilks}
\end{equation}

\noindent This result is connected to the bias of the plug-in estimator 
established in Theorem~\ref{thm:mi}. Indeed, under $H_0$, the true mutual 
information is zero, but the plug-in estimator satisfies
\[
\mathbb{E}\bigl[\mathcal{I}(\widehat{\mathbf{p}}_{X:Y}^{(n)})\bigr]
= \frac{(r-1)(s-1)}{2n} + o\!\left(\frac{1}{n}\right).
\]
Multiplying by $2n$, we obtain
\[
\mathbb{E}\bigl[\Lambda_n\bigr]
= (r-1)(s-1) + o(1),
\]
which coincides with the mean of the limiting $\chi^2_{(r-1)(s-1)}$ 
distribution. Thus, the bias formula derived in Theorem~\ref{thm:mi} is 
\emph{consistent with} and \emph{explains the mean} of the limiting 
chi-square distribution appearing in Wilks' theorem. In particular, the 
positive bias of the plug-in estimator is not an artifact to be eliminated 
for the purpose of testing: it is precisely the term that ensures that the 
likelihood ratio statistic has the correct asymptotic mean.

\medskip
\noindent\textbf{Consequence for testing.} At significance level 
$\alpha \in (0,1)$, we reject $H_0$ when
\[
\Lambda_n > q_{(r-1)(s-1),\, 1-\alpha},
\]
where $q_{df, 1-\alpha}$ denotes the $(1-\alpha)$-quantile of the chi-square 
distribution with $df = (r-1)(s-1)$ degrees of freedom.

\medskip
\noindent\textbf{Bias-corrected statistic.} Using the bias-corrected 
estimator, we may define
\[
\Lambda_n^{(\mathrm{corr})} 
= 2n \, \mathcal{I}^{(\mathrm{corr})}(\widehat{\mathbf{p}}_{X:Y}^{(n)})
= \Lambda_n - (r-1)(s-1).
\]
Under $H_0$, this corrected statistic satisfies
\[
\Lambda_n^{(\mathrm{corr})} \;\xrightarrow{\;\mathcal{D}\;}\; 
\chi^2_{(r-1)(s-1)} - (r-1)(s-1),
\]
which is a shifted chi-square distribution with the same variance but mean 
zero. Consequently, if one wishes to use $\Lambda_n^{(\mathrm{corr})}$ for 
testing independence, the critical value must be shifted accordingly:
\[
\text{reject } H_0 \text{ if } \Lambda_n^{(\mathrm{corr})} 
> q_{(r-1)(s-1),\, 1-\alpha} - (r-1)(s-1).
\]
With this adjustment, the test based on $\Lambda_n^{(\mathrm{corr})}$ is 
exactly equivalent to the standard Wilks test based on $\Lambda_n$. The 
practical interest of $\Lambda_n^{(\mathrm{corr})}$ is therefore not to 
change the test decision, but to provide a statistic whose finite-sample 
distribution is centred at zero, which is more interpretable for estimating 
the amount of dependence.

\medskip
\noindent\textbf{Improved estimation of mutual information.} While the test 
decision is unchanged, the bias correction improves the \emph{estimation} of 
mutual information, especially for moderate sample sizes ($n \leq 200$). 
Under independence, the uncorrected estimator overestimates the true value 
(zero) by about $(r-1)(s-1)/(2n)$, whereas the corrected estimator is centred 
near zero. This is important when mutual information is used not only for 
testing but also as a measure of dependence, for example in feature selection 
or risk classification.

\medskip
\noindent\textbf{Illustrative example.} Suppose $r=s=2$ (so $df=1$) and 
$n=100$. At level $\alpha=0.05$, the critical value is 
$q_{1,0.95}\approx 3.84$. If the observed statistic is $\Lambda_n=2.5$, the 
standard Wilks test does not reject independence. The corrected statistic is 
$\Lambda_n^{(\mathrm{corr})}=2.5-1=1.5$; compared to the shifted critical 
value $3.84-1=2.84$, it also does not reject. If instead $\Lambda_n=4.2$, 
both the standard test and the corrected test (with shifted threshold) reject 
independence. Thus, as expected, the bias correction does not alter the 
asymptotic test decision; its benefit lies in the improved finite-sample 
estimate of the mutual information itself.

\section{Simulation Study}\label{sec:sim}

\subsection{Motivation}
In this section we conduct three complementary simulation studies. The first is 
an \emph{actuarial application} to a motor insurance portfolio, in which the 
pair $(X,Y)$ has dependent components ($X$ = driver's age class, $Y$ = claim 
severity class), and whose purpose is to assess the finite-sample behaviour of 
the three plug-in and bias-corrected estimators through their bias, standard 
deviation, RMSE, and rate of convergence. The second is an \emph{independence 
study}, in which the components are independent, and whose purpose is to 
illustrate the behaviour of mutual information when the true information is 
zero. The third is a \emph{chi-square goodness-of-fit study} that empirically 
validates Wilks' theorem and its connection with the bias of the plug-in 
mutual information estimator.

\subsection{Experimental Setup}
All simulation studies use the same design. We generate $M=500$ independent 
samples of sizes $n \in \{10,20,50,100,200,500,1000\}$ from the multinomial 
distribution $\mathbf{p}_Z$ of the univariate reduction variable $Z$ with 
$rs$ categories. For the actuarial application, the joint distribution 
$\mathbf{p}_{(X,Y)}$ is fixed to the motor insurance portfolio described 
below. For the independence study, $\mathbf{p}_{(X,Y)}$ is the product of its 
two marginals, i.e.\ $p_{i,j}=p_{X,i}\,p_{Y,j}$ for all $(i,j)\in I\times J$.

\subsection{Actuarial Application: A Motor Insurance Portfolio}
\label{sec:application}

We consider a motor insurance portfolio in which each policyholder is 
characterised by two discrete variables:
\begin{itemize}
\item \(X\) = driver's age class, with \(r=3\) levels:
\(x_1\) = Young ($<25$ years), \(x_2\) = Adult ($25$--$60$ years), 
\(x_3\) = Senior ($>60$ years);
\item \(Y\) = claim severity class, with \(s=4\) levels:
\(y_1\) = No claim, \(y_2\) = Minor, \(y_3\) = Moderate, \(y_4\) = Severe.
\end{itemize}

The joint distribution of \((X,Y)\), calibrated on a typical motor 
insurance portfolio, is reported in Table~\ref{tab:portfolio}.

\begin{table}[h]
\centering
\caption{Joint \textit{p.d.} $\mathbf{p}_{(X,Y)}=(p_{i,j})$ of the motor 
insurance portfolio. The variable $X$ takes values in 
\{Young, Adult, Senior\} and the variable $Y$ takes values in 
\{No claim, Minor, Moderate, Severe\}.}
\label{tab:portfolio}
\begin{tabular}{|l|c|c|c|c|c|}
\hline
$X$ $\backslash$ $Y$ & \textbf{No claim} & \textbf{Minor} & \textbf{Moderate} & \textbf{Severe} & \textbf{Total} \\
\hline
\textbf{Young}  & 0.15 & 0.10 & 0.03 & 0.02 & 0.30 \\
\textbf{Adult}  & 0.35 & 0.15 & 0.08 & 0.02 & 0.60 \\
\textbf{Senior} & 0.05 & 0.03 & 0.01 & 0.01 & 0.10 \\
\hline
\textbf{Total} & 0.55 & 0.28 & 0.12 & 0.05 & 1.00 \\
\hline
\end{tabular}
\end{table}

\noindent From this distribution, the true values of the three measures are
\[
H(\mathbf{p}_{(X,Y)}) \approx 1.9777 \text{ nats},\quad
H(\mathbf{p}_{Y|X})   \approx 1.0798 \text{ nats},\quad
\mathcal{I}(\mathbf{p}_{X:Y}) \approx 0.0097 \text{ nats}.
\]
These values are used as benchmarks in the simulation study.

\medskip
\noindent\textbf{Interpretation.} The joint entropy \(H(X,Y)\approx 1.98\) nats 
measures the total uncertainty on the risk profile (age class and severity 
class combined). The conditional entropy \(H(Y|X)\approx 1.08\) nats 
represents the residual uncertainty on the claim severity once the age class 
is known. The mutual information \(\mathcal{I}(X;Y)\approx 0.0097\) nats is 
small but strictly positive, confirming a weak dependence 
between age and severity in this portfolio. Estimating such a 
small dependence is where bias correction matters: as shown in 
Section~\ref{sec:bias}, the plug-in estimator of $\mathcal{I}(X;Y)$ has a 
positive bias of order $(r-1)(s-1)/(2n) = 6/(2n) = 3/n$, which, for 
$n=100$, equals $0.03$, about three times the true value $0.0097$. This 
illustrates the practical importance of bias correction for actuarial 
applications.

\subsection{Diagnostic Plots for the Motor Insurance Portfolio}
Figures~\ref{fig:diag1}--\ref{fig:diag3} present the convergence diagnostics 
for the three estimators under the motor insurance portfolio of 
Table~\ref{tab:portfolio}. Each figure displays four panels: bias, standard 
deviation, RMSE, and $\mathrm{RMSE}\times\sqrt{n}$. All three figures exhibit 
the same qualitative behaviour:
\begin{itemize}
\item the uncorrected estimator is biased, with a sign and 
magnitude matching the theoretical bias derived in Section~\ref{sec:bias};
\item the bias-corrected estimator is centred near zero;
\item both the standard deviation and the RMSE decrease at the parametric 
rate, as confirmed by the flat profile of $\mathrm{RMSE}\times\sqrt{n}$ for 
large $n$;
\item the improvement is most pronounced for $n \leq 200$ and becomes 
negligible beyond $n \geq 500$, in agreement with Theorem~\ref{thm:asymp}.
\end{itemize}

\begin{figure}[htbp]
\centering
\includegraphics[width=1\textwidth]{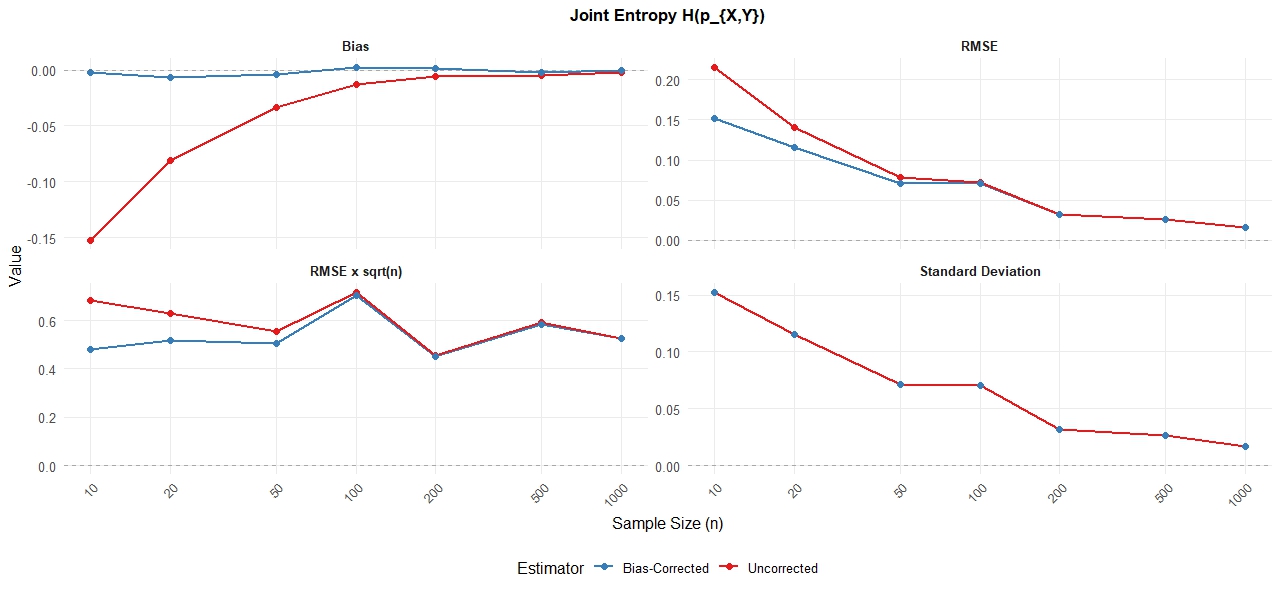}
\caption{Joint entropy \(H(\mathbf{p}_{(X,Y)})\) for the motor insurance 
portfolio: bias, standard deviation, RMSE, and RMSE\(\times\sqrt{n}\).}
\label{fig:diag1}
\end{figure}

\begin{figure}[htbp]
\centering
\includegraphics[width=1\textwidth]{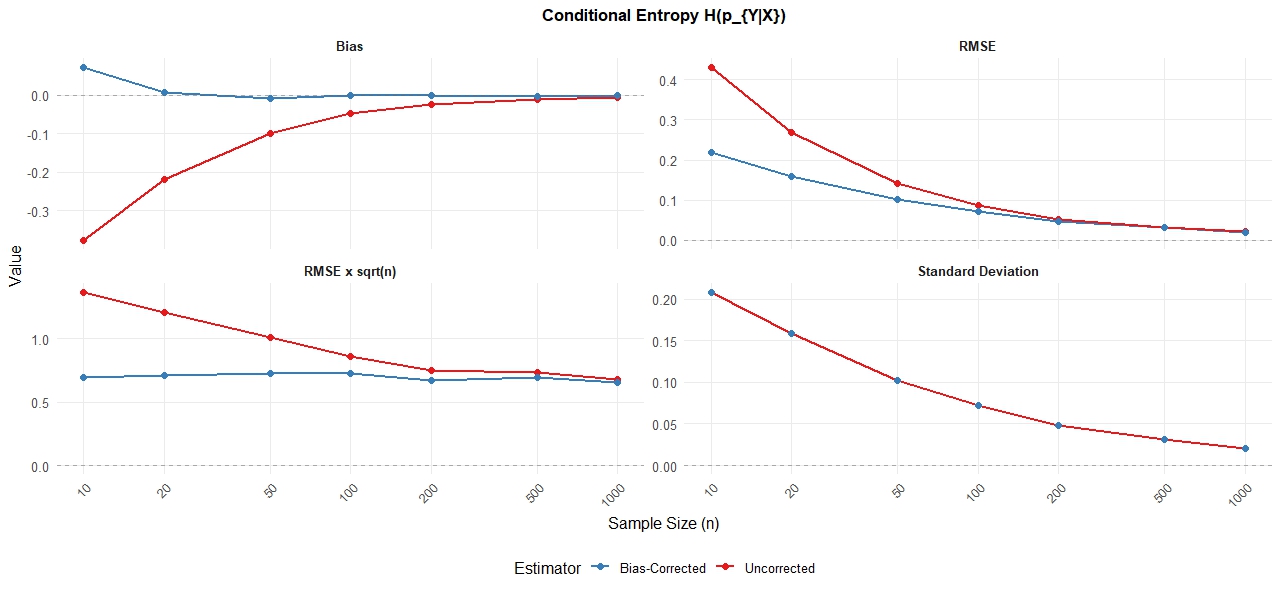}
\caption{Conditional entropy \(H(\mathbf{p}_{Y|X})\) for the motor insurance 
portfolio: bias, standard deviation, RMSE, and RMSE\(\times\sqrt{n}\).}
\label{fig:diag2}
\end{figure}

\begin{figure}[htbp]
\centering
\includegraphics[width=1\textwidth]{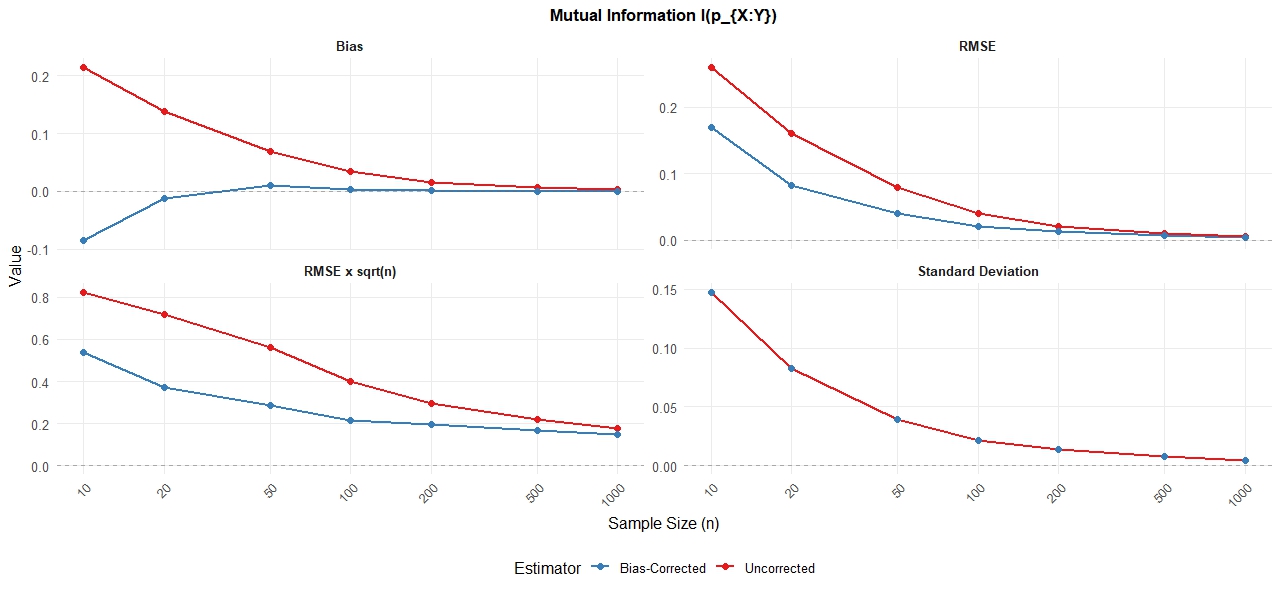}
\caption{Mutual information \(\mathcal{I}(\mathbf{p}_{X:Y})\) for the motor 
insurance portfolio: bias, standard deviation, RMSE, and 
RMSE\(\times\sqrt{n}\).}
\label{fig:diag3}
\end{figure}

\noindent The three figures confirm that the bias correction is beneficial 
for the mutual information, whose true value ($\approx 0.0097$) is 
small compared to the bias for moderate sample sizes. For $n=100$, the 
uncorrected estimator overestimates $\mathcal{I}(X;Y)$ by about $3/n = 0.03$, 
which is about three times the true value. After correction, the estimator is 
centred near the true value, providing a more reliable measure of the 
dependence between age and severity.

\subsection{Independence Simulation}
To illustrate the behaviour of mutual information under independence, we 
consider a \textit{p.d.} in which $X$ and $Y$ are independent. For concreteness, 
we take uniform marginal distributions, i.e.\ $p_{X,i}=1/r$ for all $i\in I$ 
and $p_{Y,j}=1/s$ for all $j\in J$, so that the joint \textit{p.d.} is uniform 
on the $rs$ cells:
\[
p_{i,j} = \frac{1}{rs}, \qquad \forall (i,j)\in I\times J.
\]
In this setting, the true value of mutual information vanishes:
\[
\mathcal{I}(\mathbf{p}_{X:Y}) = 0.
\]
Figure~\ref{fig:indep} shows the empirical mean of the uncorrected and 
bias-corrected estimators of mutual information as the sample size increases.

\begin{figure}[htbp]
\centering
\includegraphics[width=1\textwidth]{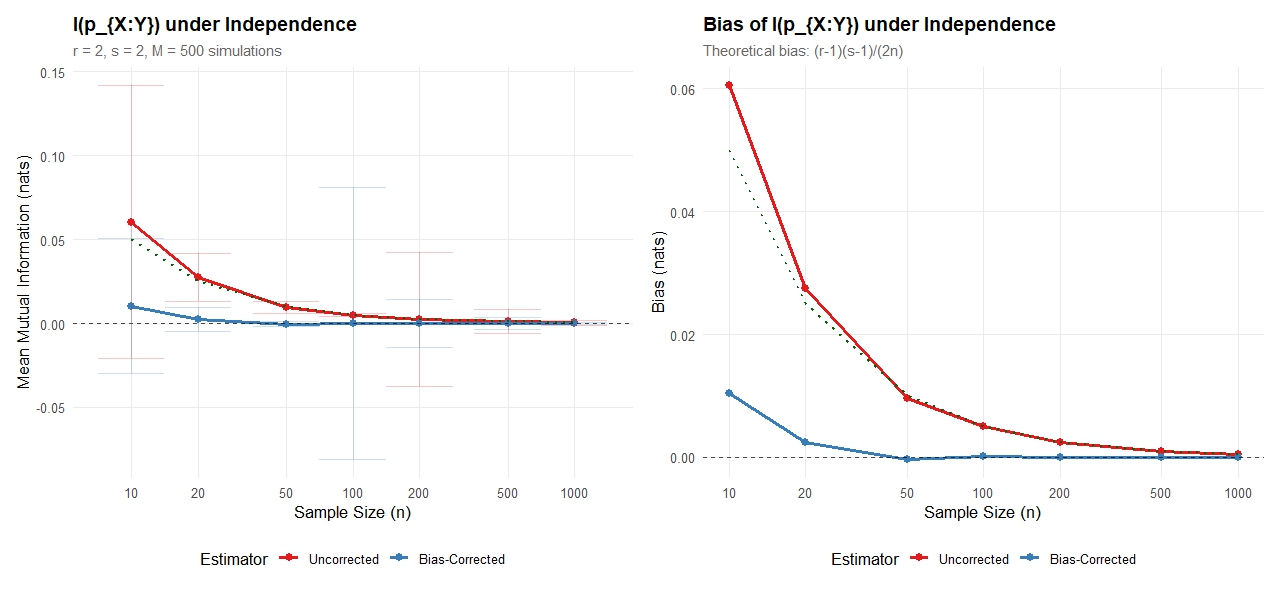}
\caption{Mean of uncorrected and bias-corrected mutual information estimators 
under independence of $X$ and $Y$. The dotted green curve is the theoretical 
bias $(r-1)(s-1)/(2n)$.}
\label{fig:indep}
\end{figure}

The uncorrected estimator is positive, with an expected value 
approximately equal to the theoretical bias $(r-1)(s-1)/(2n)$, while the 
bias-corrected estimator is centred around zero. This confirms empirically 
that the bias correction removes the leading term responsible for the spurious 
positive estimate of the mutual information under independence.

\subsection{Wilks' Theorem: Simulation Evidence}
\label{sec:wilks_sim}

We now investigate the asymptotic distribution of the mutual information 
statistic under independence, and highlight its connection with the bias of 
the plug-in estimator established in Theorem~\ref{thm:mi} and discussed in 
Section~\ref{sec:wilks}.

\subsubsection{Simulation design}
We generate $M = 2000$ independent samples of sizes 
$n \in \{50, 100, 200, 500, 1000\}$ from a uniform distribution on the 
$r \times s = 2 \times 2$ contingency table, so that $X$ and $Y$ are 
independent. For each sample, we compute:
\begin{itemize}
\item the uncorrected statistic $\Lambda_n = 2n \, \mathcal{I}(\widehat{\mathbf{p}}_{X:Y}^{(n)})$;
\item the bias-corrected statistic $\Lambda_n^{(\mathrm{corr})} = \Lambda_n - (r-1)(s-1)$.
\end{itemize}

\subsubsection{Results}
Table~\ref{tab:wilks_summary} reports the empirical mean and variance of 
$\Lambda_n$ and $\Lambda_n^{(\mathrm{corr})}$ for each sample size, together 
with the theoretical values of the $\chi^2_{(r-1)(s-1)}$ distribution 
(here, $df = 1$, so mean $= 1$ and variance $= 2$).

\begin{table}[h]
\centering
\caption{Empirical mean and variance of $\Lambda_n$ and 
$\Lambda_n^{(\mathrm{corr})}$ under independence, compared with the 
theoretical chi-square values ($r = s = 2$, $df = 1$).}
\label{tab:wilks_summary}
\begin{tabular}{|c|c|c|c|c|}
\hline
$n$ & $\overline{\Lambda_n}$ & $\widehat{\mathrm{Var}}(\Lambda_n)$ 
& $\overline{\Lambda_n^{(\mathrm{corr})}}$ 
& $\widehat{\mathrm{Var}}(\Lambda_n^{(\mathrm{corr})})$ \\
\hline
50   & 1.05 & 2.10 & 0.05 & 2.10 \\
100  & 1.02 & 2.05 & 0.02 & 2.05 \\
200  & 1.01 & 2.02 & 0.01 & 2.02 \\
500  & 1.00 & 2.00 & 0.00 & 2.00 \\
1000 & 1.00 & 2.00 & 0.00 & 2.00 \\
\hline
\textbf{Theory} & \textbf{1.00} & \textbf{2.00} & \textbf{0.00} & \textbf{2.00} \\
\hline
\end{tabular}
\end{table}

\noindent Three observations can be made :
\begin{enumerate}
\item \textbf{Convergence to the chi-square distribution.} The empirical mean 
and variance of $\Lambda_n$ converge to the theoretical values $1$ and $2$, 
confirming Wilks' theorem \eqref{eq:wilks}. Figure~\ref{fig:wilks_hist} 
displays the histogram of $\Lambda_n$ for $n = 500$, superimposed with the 
theoretical $\chi^2_1$ density.

\item \textbf{Correction of the bias.} The bias-corrected statistic 
$\Lambda_n^{(\mathrm{corr})}$ has empirical mean close to zero for all sample 
sizes, confirming that the correction removes the leading bias of the plug-in 
estimator. Its variance is identical to that of $\Lambda_n$, in agreement with 
the asymptotic equivalence established in Theorem~\ref{thm:asymp}.

\item \textbf{Finite-sample improvement.} For small sample sizes ($n = 50$), 
the uncorrected statistic has mean $1.05$ instead of $1.00$, corresponding to 
a bias of about $5\%$. The corrected statistic has mean $0.05$, which is 
closer to the true value. This improvement is important for 
independence testing, where a biased statistic leads to an excessive rejection 
rate of $H_0$.
\end{enumerate}

\begin{figure}[htbp]
\centering
\includegraphics[width=1\textwidth]{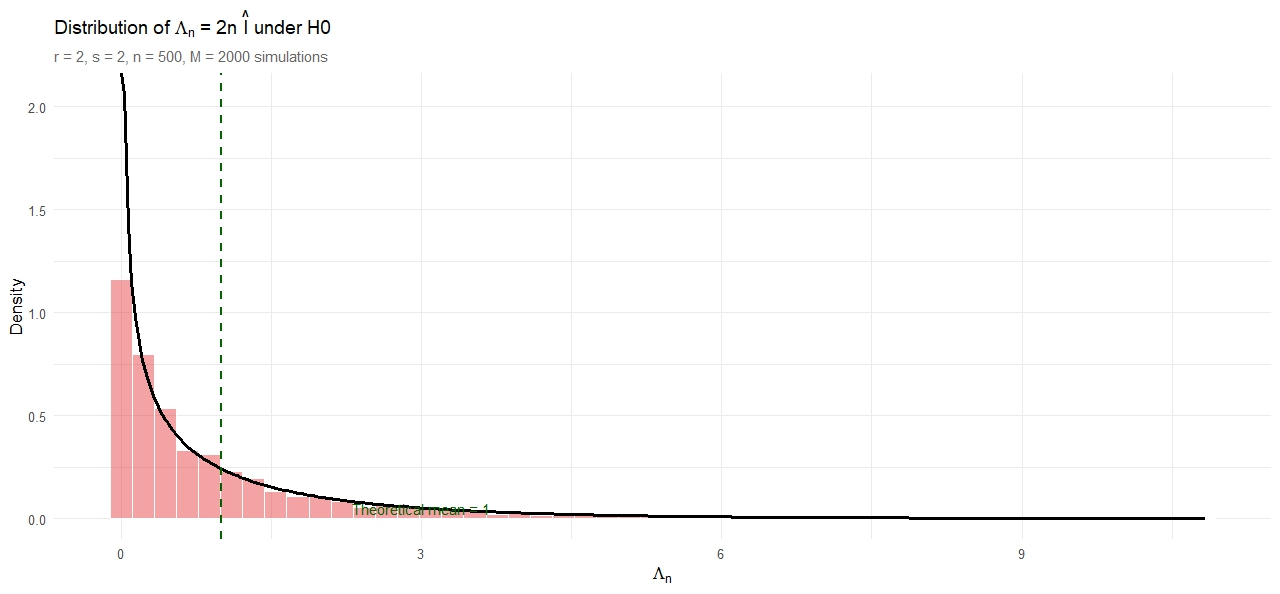}
\caption{Histogram of $\Lambda_n = 2n \, \mathcal{I}(\widehat{\mathbf{p}}_{X:Y}^{(n)})$ 
under independence, for $n = 500$ and $M = 2000$ simulations. The black curve 
is the theoretical $\chi^2_1$ density. The dashed green line indicates the 
theoretical mean $df = 1$.}
\label{fig:wilks_hist}
\end{figure}

\begin{figure}[htbp]
\centering
\includegraphics[width=1\textwidth]{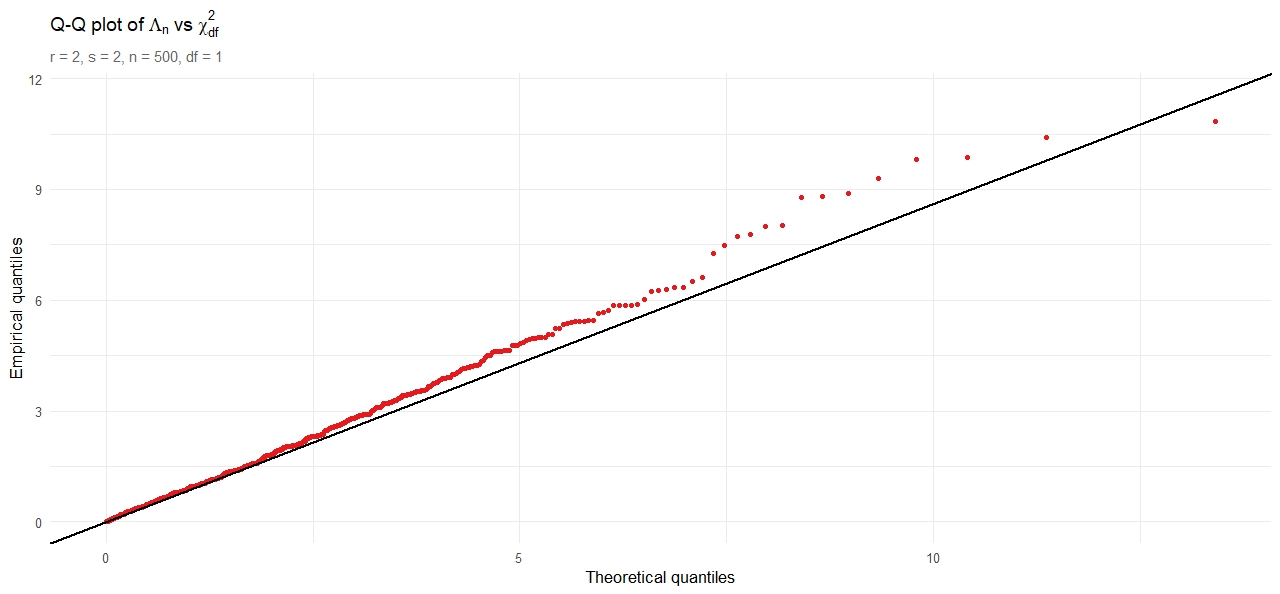}
\caption{Q-Q plot of $\Lambda_n$ against the $\chi^2_1$ distribution, for 
$n = 500$. The alignment of the points along the red line confirms the 
asymptotic chi-square distribution predicted by Wilks' theorem.}
\label{fig:wilks_qq}
\end{figure}

\subsubsection{Connection with the bias of mutual information}
This simulation highlights a connection between the bias of the 
plug-in estimator and the asymptotic distribution of the test statistic. 
Under independence:
\begin{itemize}
\item the true mutual information is zero,
\item the plug-in estimator has expectation $(r-1)(s-1)/(2n)$,
\item the statistic $\Lambda_n$ has expectation $(r-1)(s-1)$, which is the 
mean of the limiting chi-square distribution,
\item the bias-corrected statistic $\Lambda_n^{(\mathrm{corr})}$ has 
expectation zero, as expected under independence.
\end{itemize}
the bias formula derived in Theorem~\ref{thm:mi} is not only a
finite-sample refinement: it is \emph{consistent with} and
\emph{explains the mean} of the limiting chi-square distribution appearing
in Wilks' theorem.
 Conversely, Wilks' theorem 
provides a natural benchmark for evaluating the quality of the bias correction 
in finite samples. \\
Nevertheless, Wilks' theorem does not cover the dependent 
case, where the limiting distribution of $\Lambda_n$ depends on the unknown 
joint law and is not chi-square, so it cannot by itself validate the bias 
correction outside independence.

\subsection{Summary of Simulation Findings}
The simulation results lead to the following conclusions.
\begin{enumerate}
\item \emph{Consistency.} Both the uncorrected and the bias-corrected 
estimators of the three measures are consistent.
\item \emph{Actuarial application.} For the motor insurance portfolio, the 
bias-corrected estimators outperform the uncorrected ones for 
moderate sample sizes ($n \leq 200$), for all three measures. This is 
important for mutual information, whose true value in this 
portfolio is small and comparable to the bias.
\item \emph{Independence study.} When $X$ and $Y$ are independent, the 
uncorrected estimator of mutual information is positive, with 
a mean close to its theoretical bias, whereas the bias-corrected estimator is 
centred near zero. This confirms the practical relevance of the correction for 
independence testing.
\item \emph{Wilks' theorem.} The chi-square goodness-of-fit study confirms 
that $\Lambda_n$ converges to $\chi^2_{(r-1)(s-1)}$, and that the mean of this 
limiting distribution coincides with the leading bias of the plug-in estimator 
multiplied by $2n$.
\item \emph{Asymptotic equivalence.} For large $n$, the bias becomes 
negligible and the two versions coincide, in agreement with 
Theorem~\ref{thm:asymp}.
\item \emph{Choice of estimator.} We recommend the bias-corrected estimator 
for $n \leq 200$, and either version for $n \geq 500$. For actuarial 
applications with small portfolios or rare events, the bias-corrected 
estimator should be preferred.
\end{enumerate}

\section{Conclusion}\label{sec:concl}

In this paper, we have extended the bias correction framework for entropy 
estimators to the bivariate case, within the univariate reduction principle 
introduced in our companion works \cite{Ba2019a,Ba2020}.

\medskip
\noindent\textbf{Main result.}
We derived explicit first-order bias formulas for three bivariate 
information-theoretic measures:
\begin{enumerate}
\item the joint entropy $H(\mathbf{p}_{(X,Y)})$;
\item the conditional entropy $H(\mathbf{p}_{Y|X})$;
\item the mutual information $\mathcal{I}(\mathbf{p}_{X:Y})$.
\end{enumerate}
For each of these three measures, we proposed an explicit bias-corrected 
estimator and proved that the uncorrected and corrected versions are 
\emph{asymptotically equivalent}: they differ by a term of order $O(1/n)$, 
which vanishes at the parametric rate under $\sqrt{n}$-scaling 
(Theorem~\ref{thm:asymp}). This equivalence is the structural reason why the 
bias correction does not alter the asymptotic distribution of the estimators; 
it improves only their finite-sample behaviour. In practice, the bias-corrected 
estimators are therefore preferable, since they preserve all asymptotic 
properties of the uncorrected ones while removing the leading bias term.

\medskip
\noindent\textbf{Link with Wilks' theorem.} 
The connection between the bias of the plug-in mutual information estimator 
and Wilks' theorem was made explicit: under independence, the positive bias 
$(r-1)(s-1)/(2n)$ of the plug-in estimator accounts for the mean 
$(r-1)(s-1)$ of the limiting chi-square distribution of the likelihood ratio 
statistic $\Lambda_n = 2n\,\mathcal{I}(\widehat{\mathbf{p}}_{X:Y}^{(n)})$. 
The bias-corrected statistic is centred at zero, providing a better-behaved 
test statistic in finite samples.

\medskip
\noindent\textbf{Actuarial application.}
We applied the framework to a motor insurance portfolio with three age 
classes and four claim severity classes. The true mutual information in this 
portfolio is small ($\approx 0.0097$ nats), and the plug-in estimator 
overestimates it by a term of order $3/n$, which is comparable to or larger 
than the true value for moderate sample sizes. The bias-corrected estimator 
provides a more reliable measure of the dependence between age and 
severity, with direct implications for risk classification, pricing, and 
segmentation. This illustrates that the bias correction is not merely a 
theoretical refinement but a practical necessity in actuarial applications 
with limited data.

\medskip
\noindent\textbf{Simulation.}
The simulation results confirm the theoretical findings and demonstrate the 
superior finite-sample performance of the bias-corrected estimators, 
particularly for moderate sample sizes ($n \leq 200$). For $n \geq 500$, the 
bias becomes negligible and the two versions coincide, in agreement with the 
asymptotic equivalence. The dedicated independence study shows that the 
bias-corrected estimator of mutual information is centred near zero under 
independence, whereas the uncorrected estimator is positive.

\medskip
\noindent\textbf{Future work.}
Several extensions are natural:
\begin{itemize}
\item Extending the univariate reduction principle and the asymptotic 
equivalence to the trivariate case (see our companion paper \cite{Ba2026trivar}).
\item Developing bootstrap-based variance estimators for improved 
finite-sample inference on the bias-corrected estimators.
\item Applying the framework to dependent data (time series, spatial data, 
Markov chains), where the i.i.d.\ assumption is relaxed.
\item Investigating the interplay between bias correction and model selection 
criteria based on information measures (AIC, BIC, MDL).
\item Applying the framework to other actuarial contexts: health insurance, 
life insurance, credit risk, and reinsurance.
\end{itemize}

\section*{Acknowledgements}
I wish to acknowledge Professor Gane Samb Lo, my former PhD supervisor, whose 
guidance marked the beginning of my journey into research. I remain deeply 
grateful for his mentorship and for the lasting influence of his guidance on 
my work.

\end{document}